\newtheorem{thm}{Theorem}
\newtheorem{theoremA}{Theorem}
\newtheorem{lem}{Lemma}
\newtheorem{false statement}{False statement}
\theoremstyle{definition}
\newtheorem{problem}{Problem}
\newcounter{mathitem}
  {\begin{list}{{$(\roman{mathitem})$}}{
   \setcounter{mathitem}{0}
   \usecounter{mathitem}
   \setlength{\topsep}{0pt plus 2pt minus 0pt}
   \setlength{\parskip}{0pt plus 2pt minus 0pt}
   \setlength{\partopsep}{0pt plus 2pt minus 0pt}
   \setlength{\parsep}{0pt plus 2pt minus 0pt}
   \setlength{\leftmargin}{35pt}
   \setlength{\itemsep}{0pt plus 2pt minus 0pt}}}
  {\end{list}}
\begin{document}

\title{\bf Eigenvalues and cycles of consecutive lengths}

\date{}

\author{Binlong Li\thanks{School of Mathematics and Statistics,
Northwestern Polytechnical University, Xi'an, Shaanxi 710072,
P.R.~China. Email: binlongli@nwpu.edu.cn. Partially supported
by the NSFC grant (No.\ 12171393) and Natural Science Foundation of Shaanxi Provincial Department of Education (2021JM-040).}~~~ Bo
Ning\thanks{Corresponding author. College of Computer Science, Nankai University, Tianjin 300350, P.R.
China. Email: bo.ning@nankai.edu.cn. Partially supported
by the NSFC grant (No.\ 11971346).}}
\maketitle

\begin{center}
\begin{minipage}{140mm}
\small\noindent{\bf Abstract:}
As the counterpart of classical theorems on cycles of consecutive lengths
due to Bondy and Bollob\'as in spectral graph theory, Nikiforov
proposed the following open problem in 2008:
What is the maximum $C$ such that for all positive $\varepsilon<C$ and sufficiently large $n$, every
graph $G$ of order $n$ with spectral radius $\rho(G)>\sqrt{\lfloor\frac{n^2}{4}\rfloor}$
contains a cycle of length $\ell$ for each integer $\ell\in[3,(C-\varepsilon)n]$.
We prove that $C\geq\frac{1}{4}$, improving the existing bounds.
Besides several novel ideas, our proof technique is partly inspired by the recent research on Ramsey numbers of star versus
large even cycles due to Allen, {\L}uczak, Polcyn and Zhang,
and with aid of a powerful spectral inequality.
We also derive an Erd\H{o}s-Gallai-type edge number condition for
even cycles, which may be of independent
interest.

\smallskip
\noindent{\bf Keywords: Ramsey Theory; spectral radius; cycles}

\smallskip
\noindent{\bf AMS classification: 05C50; 05C35}
\end{minipage}
\end{center}


This note is devoted to an open problem on cycles with consecutive lengths
due to Nikiforov \cite{N08}. Throughout this note, for a graph $G$, $V(G)$
denotes the vertex set of $G$, $|G|$ and $e(G)$ denotes the number of
vertices and edges in $G$, respectively. We denote by $\rho(G)$
the spectral radius of $G$. Let $G_1$ and $G_2$ be two graphs.
We use $G_1\vee G_2$ to denote the new graph obtained from the disjoint
union of $G_1$ and $G_2$ by adding all edges $xy$ where $x\in V(G_1)$
and $y\in V(G_2)$.

A classical theorem due to Bondy \cite{B71} says that every
hamiltonian graph $G$ on $n$ vertices contains cycles of all lengths
$\ell \in [3,n]$ if $e(G)\geq \frac{n^2}{4}$, unless $n$ is even
and $G$ is isomorphic to $K_{\frac{n}{2},\frac{n}{2}}$.
If one drops the condition that ``$G$ is hamiltonian" in Bondy's theorem, a result in Bollob\'{a}s'
textbook \cite[Corrolary~5.4]{B76} states that such a graph
contains all cycles $C_{\ell}$ for
each $\ell \in [3,\left\lfloor\frac{n+3}{2}\right\rfloor]$.
Nikiforov \cite{N08} considered cycles of consecutive lengths
from a spectral perspective and posed the following open problem.

\begin{problem}[Nikiforov \cite{N08}]\label{Prob:2}
What is the maximum $C$ such that for all positive $\varepsilon<C$ and sufficiently large $n$, every
graph $G$ of order $n$ with $\rho(G)>\sqrt{\lfloor\frac{n^2}{4}\rfloor}$
contains a cycle of length $\ell$ for every integer $3\leq \ell\leq (C-\varepsilon)n$.
\end{problem}
One may guess $C=\frac{1}{2}$. However, the class of graphs
$G=K_s\vee (n-s)K_1$ where $s=\left\lfloor\frac{(3-\sqrt{5})n}{4}\right\rfloor$ (see \cite{N08})
shows $C\leq \frac{(3-\sqrt{5})}{2}$. Nikiforov \cite{N08} proved that
$C\geq \frac{1}{320}$. Ning and Peng \cite{NP20} slightly refined this
as $C\geq \frac{1}{160}$. Only very recently, Zhai and
Lin \cite{ZL} improved these results to $C\geq\frac{1}{7}$.

The purpose of this note is to show $C\geq \frac{1}{4}$
by completely different methods.

One key ingredient is from Ramsey Theory.
Very recently, Allen, {\L}uczak, Polcyn and Zhang \cite{ALPZ20+} studied the
Ramsey numbers of large even cycles versus stars. Their key technique of finding large
even cycles is to combine a theorem on cycle lengths with given parity under minimum degree
condition due to Voss and Zuluaga \cite{VZ77} and a theorem on the distribution
of cycles with consecutive lengths due to Gould, Haxell, and Scott \cite{GHS02}.
The proof of our main theorem
is somewhat inspired by this idea.

\begin{thm}\label{Thm:SpectraConsecutiveCycles}\footnote{If $0<\varepsilon<10^{-6}$,
then we can choose $N=2.5\times 10^{10}{\varepsilon}^{-1}$.}
Let $\varepsilon$ be real with $0<\varepsilon<\frac{1}{4}$. Then there
exists an integer $N:=N(\varepsilon)$, such that if $G$ is a graph on
$n$ vertices with $n\geq N$ and $\rho(G)>\sqrt{\lfloor\frac{n^2}{4}\rfloor}$,
then $G$ contains all cycles $C_{\ell}$ with $\ell \in [3,(\frac{1}{4}-\varepsilon)n]$.
\end{thm}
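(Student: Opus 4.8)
The plan is to convert the spectral hypothesis into dense combinatorial structure centred at one vertex, and then run a cycle-spectrum argument in the spirit of \cite{ALPZ20+}. First I would extract two consequences of $\rho(G)>\sqrt{\lfloor n^2/4\rfloor}$. Nikiforov's spectral Tur\'an bound asserts that every triangle-free graph on $n$ vertices has spectral radius at most $\sqrt{\lfloor n^2/4\rfloor}$; hence the strict inequality forces a triangle, which simultaneously disposes of $\ell=3$ and certifies that $G$ is \emph{not} bipartite (the latter is what will later let us produce odd cycles). Next, let $x$ be the Perron eigenvector scaled so that its largest coordinate is $x_u=1$, attained at a vertex $u$. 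Using $Ax=\rho x$ twice,
\[
\Big\lfloor\tfrac{n^2}{4}\Big\rfloor<\rho^2=\rho^2 x_u=(A^2x)_u\le\sum_{w}(A^2)_{uw}=\sum_{v\in N(u)}d(v),
\]
so the neighbourhood of $u$ carries a degree sum exceeding $\lfloor n^2/4\rfloor$; this is the ``powerful spectral inequality'' that localises all the mass near $u$. Alternatively, Hong's bound $\rho^2\le 2e(G)-n+1$ gives $e(G)>\tfrac18 n^2+\tfrac12(n-1)$, an edge count just past the Erd\H{o}s--Gallai threshold for circumference $\tfrac14 n$.

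The key reduction is that a cycle of length $\ell$ through $u$ is the same thing as a path of length $\ell-2$ in $G-u$ whose two endpoints lie in $N(u)$. In particular, if $G[N(u)]$ contains a single path $P$ of length $m$, then every subpath of $P$ has both of its endpoints inside $N(u)$, and closing it through $u$ yields $C_\ell$ simultaneously for all $\ell\in[3,m+2]$. Thus it would suffice to exhibit a path of length at least $(\tfrac14-\varepsilon)n-2$ inside the neighbourhood of a suitable high-degree vertex. The extremal graphs $K_s\vee(n-s)K_1$ show this is the right picture: there $N(u)$ induces $K_{s-1}\vee(n-s)K_1$, whose longest path has about $2s\approx(3-\sqrt5)n/2$ vertices, exactly matching the circumference.

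To manufacture such a long path of prescribed parity I would feed the heavy degree sum on $N(u)$ into the Erd\H{o}s--Gallai-type edge bound for even cycles established in this note: the point of the even refinement is that a bare circumference bound of \cite{B71} type delivers a long cycle of \emph{unknown} parity, whereas here we need to control parity in order to invoke the distribution theorem of Gould, Haxell and Scott \cite{GHS02}. The latter guarantees that, inside a subgraph of linear minimum degree, the attainable cycle lengths of each fixed parity form an interval with no gaps, while the theorem of Voss and Zuluaga \cite{VZ77} supplies a long cycle of each parity to anchor that interval from above; non-bipartiteness, secured in the first step, is what makes the odd interval nonempty. Combining these fills in every $\ell\in[3,(\tfrac14-\varepsilon)n]$, the short range being absorbed either by the subpath trick above or by the bottom of the Gould--Haxell--Scott interval.

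The main obstacle is attaining the sharp constant $\tfrac14$ rather than merely some small $c>0$. The naive route---pass to a subgraph $H$ with $\delta(H)\ge cn$ and apply \cite{GHS02} directly---loses control of length, since the circumference of $H$ is bounded by $|H|$, which the edge-deletion process may drive well below $\tfrac14 n$. Reconciling the \emph{global} long even cycle coming from the edge surplus $e(G)>\tfrac18 n^2$ with the \emph{local} consecutiveness furnished by minimum-degree hypotheses---that is, showing the dense part on which \cite{GHS02} operates still carries a cycle of length $(\tfrac14-\varepsilon)n$ of each parity---is where the bulk of the work, and the large threshold $N(\varepsilon)$, will go. I expect the even-cycle lemma to be precisely the device that transfers the edge count into a long even cycle living inside a linear-minimum-degree, non-bipartite piece, so that parity and consecutiveness can be achieved without sacrificing the factor $\tfrac14$.
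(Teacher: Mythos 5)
Your proposal assembles the right external tools (Hong's bound, the even-cycle Erd\H{o}s--Gallai lemma, Voss--Zuluaga, Gould--Haxell--Scott), but it has two genuine gaps, and the second one is precisely the key idea of the paper. First, the step you explicitly defer---producing a subgraph of linear minimum degree that still carries a cycle of length about $\frac{n}{4}$ of \emph{each} parity---is not a technicality but the whole argument; the paper does it by iteratively deleting vertices of degree at most $\frac{n}{8}$, checking that the average degree stays above $\frac{n}{4}$ (so the surviving graph $H$ satisfies $\delta(H)>\frac{n}{8}$ and $e(H)>\frac{n}{4}\cdot\frac{|H|-1}{2}$), and then applying the even-cycle lemma \emph{to $H$ itself}, which yields $ec(H)>\frac{n}{4}-1$ regardless of how small $|H|$ has become. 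Your alternative localization route (a long path inside $G[N(u)]$, closed through $u$) cannot reach the constant $\frac14$: the inequality $\rho^2\le\sum_{v\in N(u)}d(v)$ bounds a degree sum whose edges may leave $N(u)$, so it gives no useful lower bound on $e(G[N(u)])$; this is essentially the approach behind the old bounds $\frac{1}{320}$ and $\frac{1}{160}$ that the paper is designed to circumvent.

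Second, and more fundamentally, your claim that the triangle in $G$ ``certifies non-bipartiteness, which is what will later let us produce odd cycles'' does not survive the reduction: Voss--Zuluaga and GHS must be applied to the dense subgraph $H$ (in fact to a $2$-connected piece of it), and the deletions that create $H$ can destroy every odd cycle of $G$. The paper's way around this is the Sun--Das inequality $\rho^2(G)\le\rho^2(G-v)+2d_G(v)$: since only low-degree vertices are deleted, it shows $\rho^2(H)\ge\frac{n|H|-1}{4}>\lfloor\frac{|H|^2}{4}\rfloor$, so by Nosal/Mantel $H$ itself contains a triangle; the same inequality is used again after deleting cut-vertices to get a $2$-connected, non-bipartite component $F_1$ with $\delta(F_1)\ge\frac{|F_1|}{8}$ on which Voss--Zuluaga legitimately applies (you never address $2$-connectivity, which Voss--Zuluaga requires). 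Finally, the odd interval produced by GHS starts at the constant $K$, not at $3$, so ``the bottom of the GHS interval'' cannot absorb the short odd lengths $\ell\in[5,K]$; the paper covers these by invoking Nikiforov's own theorem that the spectral hypothesis forces all cycle lengths up to $\frac{n}{320}$. Without the Sun--Das mechanism (or some substitute transferring the spectral condition to the subgraph), your odd-cycle case collapses.
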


Before the proof, we first collect various known results on cycles that will be used in our arguments.
We use $ec(G)$ and $oc(G)$ to denote the length of a longest even cycle
and the length of a longest odd cycle, of $G$, respectively.

\begin{theoremA}[Voss and Zuluaga \cite{VZ77}]\label{Thm:Voss-Zuluage}
(1) Every 2-connected graph $G$ with $\delta(G)\geq k\geq 3$ having at least $2k+1$
vertices satisfies that $ec(G)\geq 2k$.
(2) Every 2-connected non-bipartite graph $G$ with $\delta(G)\geq k\geq 3$ having at least $2k+1$
vertices satisfies that $oc(G)\geq 2k-1$.
\end{theoremA}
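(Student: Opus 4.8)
The plan is to prove the two statements in parallel, separating the task of producing a \emph{long} cycle from that of producing one of the \emph{correct parity}. Fix $G$ as in the hypotheses, so $\delta(G)\ge k\ge 3$, $G$ is $2$-connected and $|G|\ge 2k+1$. First I would dispose of the length by reproving the classical circumference bound: a $2$-connected graph with minimum degree at least $k$ has a cycle of length at least $\min\{|G|,2k\}$, which here equals $2k$ because $|G|\ge 2k+1$. Concretely, take a longest path $P=u_0u_1\cdots u_p$; maximality forces $N(u_0)\subseteq\{u_1,\dots,u_p\}$ and $N(u_p)\subseteq\{u_0,\dots,u_{p-1}\}$, and the Erd\H{o}s--Gallai crossing argument (comparing $A=\{i:u_iu_0\in E(G)\}$ with $B+1=\{i+1:u_iu_p\in E(G)\}$ inside $\{1,\dots,p\}$) yields either a cycle through all of $u_0,\dots,u_p$ or the estimate $p\ge |A|+|B|\ge 2k$; $2$-connectivity then closes a sufficiently long subpath into a cycle $C$ with $|C|\ge 2k$. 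The whole difficulty of the theorem is thus concentrated in the parity.

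For part~(1) I would start from such a cycle $C$ with $|C|\ge 2k$ and stop if $|C|$ is already even. If $|C|$ is odd I would look for a \emph{parity-shifting detour}: a path $Q$, internally disjoint from $C$, joining two vertices $x,y\in C$ whose shorter $x$--$y$ arc has length $c_2$, chosen so that the cycle formed by $Q$ together with the \emph{longer} $x$--$y$ arc is even. This cycle has length $|C|+(|Q|-c_2)\ge |C|\ge 2k$ whenever $|Q|\ge c_2$, so a short detour of the right parity upgrades $C$ to an even cycle of length at least $2k$. The point where the minimum-degree hypothesis must be spent is the \emph{existence} of such a detour with $c_2$ small: a single chord only guarantees an arc of length about $|C|/2\approx k$, so one cannot reach $2k$ with one chord, and the argument has to exploit that \emph{every} vertex along $C$ has at least $k$ neighbours in order to locate a short detour of the correct parity.

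For part~(2), non-bipartiteness supplies an odd cycle $C'$ of some length $\ge 3$, possibly short. Using $2$-connectivity I would attach $C'$ to the long even cycle produced in part~(1) by two vertex-disjoint paths (Menger's theorem), forming a theta-like configuration. Since $C'$ is odd, its two arcs between the attachment points have opposite parities, so one of them can be substituted into the long cycle to flip its parity by one; if the substitution replaces a \emph{short} arc, the length drops by at most one, turning a cycle of length $\ge 2k$ into an odd cycle of length $\ge 2k-1$. The non-bipartiteness hypothesis in~(2) is genuinely needed: the $2$-connected bipartite graph $K_{k,k+1}$ has $|G|=2k+1$, $\delta=k$, and no odd cycle at all, while its longest even cycle has length exactly $2k$; this same example shows that both bounds are best possible.

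The main obstacle, in both parts, is retaining the length: elementary chord arguments flip the parity only at the cost of dropping to a cycle of length about $k$, so the heart of the proof is a \emph{global} use of the degree condition—organising the neighbourhoods of the endpoints of a longest path (or the neighbours encountered along a longest cycle) so that a parity-shifting detour of \emph{bounded} excess length can always be located. I expect the most delicate case to be when $G$ is Hamiltonian, i.e. $|C|=|G|$, where every detour must be realised by a chord of $C$ and the distribution of chords forced by $\delta(G)\ge k$ has to be analysed directly, together with the small boundary case $|G|=2k+1$.
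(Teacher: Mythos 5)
This statement is imported by the paper from Voss and Zuluaga \cite{VZ77} without proof, so your attempt has to stand on its own, and as written it is a plan rather than a proof: both parity steps are announced but never carried out. The opening move (a cycle of length at least $\min\{|G|,2k\}$ via the longest-path/Erd\H{o}s--Gallai crossing argument) is standard and fine, but the entire content of the theorem is the parity upgrade, and at exactly that point you write that one ``has to exploit'' the degree condition ``to locate a short detour of the correct parity'' --- without locating it. Worse, your own length bookkeeping already breaks in the case you flag as delicate: if $G$ is Hamiltonian, every detour $Q$ is a chord, so $|Q|=1$, and your requirement $|Q|\ge c_2$ forces $c_2\le 1$; but for the new cycle to change parity you need $|Q|-c_2$ odd, i.e.\ $c_2$ even, hence $c_2=0$, which is impossible. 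So in the Hamiltonian case the scheme as stated produces nothing, and one must instead accept even cycles strictly shorter than $|C|$ and control how much length is lost --- which is precisely the hard part of \cite{VZ77} (the actual proof is a lengthy analysis of the bridges of a longest cycle of the prescribed parity, not a single detour swap).

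Part (2) contains a concrete quantitative error. Attaching the odd cycle $C'$ to the long cycle $C$ by two vertex-disjoint paths $P_1,P_2$ (Menger) and rerouting through an arc of $C'$ does \emph{not} cost ``at most one'' in length: the rerouted cycle uses exactly one of the two arcs of $C$ between the attachment points $a,b\in C$, and $2$-connectivity gives you no control over where those attachment points land. What the configuration genuinely yields is this: the cycle through the longer $a$--$b$ arc has length $s_{\max}+|P_1|+|P_2|+t_i$, and since the two arcs $t_1,t_2$ of $C'$ have different parities ($t_1+t_2=|C'|$ is odd), one choice of $i$ makes this cycle odd. But $s_{\max}$ is only guaranteed to be at least $|C|/2$, so you obtain an odd cycle of length roughly $k+O(1)$, not $2k-1$ --- you lose up to half the cycle, exactly the loss you correctly diagnose for ``elementary chord arguments'' in part (1) but then do not avoid in part (2). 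Your tightness example $K_{k,k+1}$ is correct, and your identification of where the difficulty sits is accurate, but neither parity claim is proved, so this does not constitute a proof of the theorem.
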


\begin{theoremA}[Ore \cite{O61}]\label{Thm:Ore}
Let $G$ be a graph on $n$ vertices. If $G$ contains no Hamilton cycle, then
$e(G)\leq \binom{n-1}{2}+1$.
\end{theoremA}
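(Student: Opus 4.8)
The plan is to prove this extremal bound by a standard edge-maximality reduction, followed by a rotation argument and a short counting computation. First I would enlarge $G$ to an \emph{edge-maximal} non-Hamiltonian graph $H$ on the same $n$ vertices, obtained by adding non-edges one at a time while the graph remains non-Hamiltonian, until no further non-edge can be inserted without creating a Hamilton cycle. Since $K_n$ is Hamiltonian for $n\ge 3$, this process halts at some $H\ne K_n$ with $G\subseteq H$, and as $e(G)\le e(H)$ it suffices to prove $e(H)\le\binom{n-1}{2}+1$. I would also note that $H$ has no isolated vertex: a vertex of degree at most $1$ cannot lie on any Hamilton cycle, so if some $v$ had degree $0$ we could add an edge at $v$ and stay non-Hamiltonian, contradicting maximality; hence $\delta(H)\ge 1$.

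The core step is a rotation lemma: for every pair of non-adjacent vertices $u,v$ of $H$ one has $d_H(u)+d_H(v)\le n-1$. Indeed, by maximality $H+uv$ has a Hamilton cycle, which must use the new edge $uv$ (otherwise it would already sit inside $H$), so $H$ contains a Hamilton path $x_1\cdots x_n$ with $x_1=u$ and $x_n=v$. If $ux_i\in E(H)$ and $x_{i-1}v\in E(H)$ for some common index $i$, then the path re-routes into the Hamilton cycle $x_1 x_i x_{i+1}\cdots x_n x_{i-1} x_{i-2}\cdots x_1$ of $H$, a contradiction; so the index set recording neighbours of $u$ and the "shifted" index set recording neighbours of $v$ are disjoint subsets of $\{2,\dots,n\}$, which gives $d_H(u)+d_H(v)\le n-1$.

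I would then pick a vertex $v$ of minimum degree $k:=\delta(H)$ and partition the other vertices into its $k$ neighbours and its $n-1-k$ non-neighbours. Applying the rotation lemma to $v$ and each non-neighbour $u$ yields $d_H(u)\le n-1-k$; moreover, if $v$ has any non-neighbour at all then $k\le d_H(u)\le n-1-k$, forcing $k\le (n-1)/2$ (the alternative $k=n-1$ means $H=K_n$, already excluded). Bounding each neighbour's degree by $n-1$ and each non-neighbour's degree by $n-1-k$, the handshake identity gives
\[
2e(H)\le k+k(n-1)+(n-1-k)^2 .
\]
A short computation rewrites the right-hand side as $2\binom{n-1}{2}+2+(k-1)\bigl(k-(n-3)\bigr)$, so the target $e(H)\le\binom{n-1}{2}+1$ is exactly the inequality $(k-1)\bigl(k-(n-3)\bigr)\le 0$, i.e. $1\le k\le n-3$. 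This holds because $1\le k\le (n-1)/2$ and $(n-1)/2\le n-3$ whenever $n\ge 5$, while for $k=1$ the correction term vanishes identically, covering $n\in\{3,4\}$ with no separate analysis.

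The main thing to get right is the \emph{choice of reduction} rather than any single estimate: because the extremal graph $K_{n-1}$ with a pendant vertex has a vertex of degree $1$, the bound cannot follow from a global degree-sum hypothesis on $G$ or from any Dirac/Ore Hamiltonicity criterion applied to $G$ directly. The real content is therefore to exploit edge-maximality twice — once to exclude isolated vertices and once to secure the non-adjacent degree-sum inequality — and to select the minimum-degree vertex so that the counting parameter $k$ automatically lands in the range $[1,n-3]$ where the elementary algebra closes. I expect the rotation lemma and the deduction that a minimum-degree vertex forces $k\le (n-1)/2$ to be the only genuinely delicate points; everything else is bookkeeping.
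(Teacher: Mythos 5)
Your proof is correct, but there is nothing in the paper to compare it against: the paper states this result as Theorem~B and simply cites Ore's 1961 paper, giving no proof of its own (it is used later only as a black box in the base case $n=2k+2$ of Lemma~2). Your argument is the classical self-contained one: pass to an edge-maximal non-Hamiltonian supergraph $H$, use maximality once to rule out isolated vertices and once to produce a Hamilton $u$--$v$ path whenever $uv\notin E(H)$, derive the rotation inequality $d_H(u)+d_H(v)\le n-1$ via the standard disjoint-index-sets argument, and then count degrees from a minimum-degree vertex. I verified the bookkeeping: the identity $k+k(n-1)+(n-1-k)^2=(n-1)(n-2)+2+(k-1)\bigl(k-(n-3)\bigr)$ checks out, maximality does give $k=\delta(H)\ge 1$ (which is genuinely needed, since $k=0$ makes the correction term positive), the lemma applied to $v$ and a non-neighbour forces $k\le\frac{n-1}{2}$ because $H\ne K_n$, and this places $k$ in $[1,n-3]$ for $n\ge 5$, while $n\in\{3,4\}$ forces $k=1$ and the correction term vanishes. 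The only loose thread is the trivial range $n\le 2$, where your setup ("$K_n$ is Hamiltonian for $n\ge 3$") silently does not apply but the bound $e(G)\le\binom{n-1}{2}+1$ holds by inspection; one sentence would close that. Your closing observation is also apt: since the extremal graph $K_1\vee\bigl(K_1\cup K_{n-2}\bigr)$-type example (a pendant vertex on $K_{n-1}$) has minimum degree $1$, no Dirac/Ore-type sufficient condition applied directly to $G$ can yield the theorem, and the double use of edge-maximality is the real content.
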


\begin{theoremA}[Gould, Haxell and Scott \cite{GHS02}]\footnote{In \cite{GHS02},
the authors set $K=\frac{7.5\times 10^5}{c^5}$.}\label{Thm:GHS}
For every real number $c>0$, there exists a constant $K:=K(c)$
such that the following holds. Let $G$ be a graph with $n\geq \frac{45K}{c^4}$ vertices
and minimum degree at least $cn$. Then $G$ contains a cycle of length $t$
for every even $t\in [4,ec(G)-K]$ and every odd $t\in [K,oc(G)-K]$.
\end{theoremA}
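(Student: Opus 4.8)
The plan is to handle the even and the odd lengths separately, in each case anchoring the argument at an extremal cycle and reading off the shorter cycles from its chords and from short detours through the rest of the graph. Fix a longest even cycle $C$ (of length $ec(G)$) and a longest odd cycle $C'$ (of length $oc(G)$); these realize the top of each parity range. The entire content of the theorem is then the absence of gaps of the correct parity between each anchor and the bottom of its interval, so I would aim to show directly that every admissible length is produced by a single, explicit modification of the corresponding anchor.

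The two elementary modifications are as follows. A chord of $C$ whose endpoints lie at arc-distance $d$ splits $C$ into a short arc of length $d$ and a long arc of length $ec(G)-d$, and combining the chord with the long arc produces a cycle of length $ec(G)-d+1$, while combining it with the short arc produces a cycle of length $d+1$. Likewise a vertex $z\notin C$ adjacent to two vertices of $C$ at arc-distance $q$ gives, through the length-$2$ detour across $z$, cycles of lengths $ec(G)-q+2$ and $q+2$. Since $ec(G)$ is even, a chord realizes even lengths exactly when $d$ is odd, whereas a detour realizes even lengths exactly when $q$ is even; the analogous bookkeeping holds at the odd anchor $C'$. Thus covering every even length in $[4,ec(G)-2]$ reduces to realizing chords or detours of every required span along $C$, and similarly for the odd range at $C'$. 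The asymmetry of the statement --- even lengths all the way down to $4$ but odd lengths only down to $K$ --- is exactly what this scheme predicts, since the linear minimum degree forces short even cycles in abundance (for instance $C_4$, by a K\H{o}v\'{a}ri--S\'{o}s--Tur\'{a}n count) but permits large odd girth, so only the top part of the odd spectrum, obtained by trimming $C'$, can be certified.

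The hard part will be guaranteeing that chords and detours of \emph{every} required span are present, so that the realized lengths of each parity leave no gaps. The hypothesis makes this plausible: every vertex of the anchor has at least $cn$ neighbours while the anchor has length at most $n$, so the neighbourhoods along $C$ are dense and a span of any given small size occurs \emph{somewhere}; the real difficulty is to upgrade ``every small span occurs somewhere'' to ``every span in the whole interval occurs,'' that is, to pass from a set of lengths that is merely $O(1/c)$-dense to one with no gaps at all. I would attack this by a global counting argument: if some admissible length were missed, the corresponding span would have to be avoided by every relevant neighbourhood along the anchor, and summing these local deficiencies against the $cn$ edges at each vertex would contradict $\delta(G)\geq cn$ once $n\geq 45K/c^{4}$. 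The additive constant $K$ is precisely the slack this averaging consumes at either end: it accounts for the top losses $ec(G)-K$ and $oc(G)-K$, where there is too little room near the anchor for the count to close, and for the odd lower threshold $K$, below which the trimming of $C'$ can no longer be guaranteed.
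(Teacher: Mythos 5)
This statement is one the paper does not prove at all: Theorem~\ref{Thm:GHS} is imported verbatim from \cite{GHS02} and used as a black box, so the only meaningful benchmark is the original Gould--Haxell--Scott argument --- and measured against any complete proof, your proposal has a genuine gap at precisely the step you yourself flag as ``the hard part.'' A single surgery on the anchor cycle (one chord, or one two-edge detour) produces only lengths of the form $d+1$, $ec(G)-d+1$, $q+2$, $ec(G)-q+2$, where $d,q$ are arc-spans of neighbour pairs measured along the anchor, and nothing in the hypothesis $\delta(G)\geq cn$ forces these spans to sweep an interval. Concretely, let $G=C_n^k$ be the $k$-th power of an $n$-cycle with $k=\lceil cn/2\rceil$: then $\delta(G)=2k\geq cn$ and $G$ is pancyclic, but taking the canonical Hamilton cycle as anchor, every chord has arc-span at most $k$ and there are no off-cycle vertices, so one-step surgery certifies only even lengths in roughly $[4,k+2]\cup[n-k,n]$ and misses the entire middle of the required range $[4,ec(G)-K]$; a cycle of length about $n/2$ in this graph exists but winds around the base cycle using $\Theta(n/k)$ chords \emph{simultaneously}. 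The same example refutes the proposed repair: your counting scheme posits that a missed span creates local deficiencies which, summed against the degrees, contradict $\delta(G)\geq cn$; but in $C_n^k$ every span larger than $k$ is missed by \emph{every} neighbourhood while the degree hypothesis holds comfortably, so the principle ``a missed admissible span contradicts the minimum degree'' is simply false. (Choosing a cleverer longest even cycle as anchor is not a way out either: the proposal gives no mechanism for such a choice, and the counting lemma it rests on fails regardless.)

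What any correct proof must supply --- and what \cite{GHS02} does supply --- is a mechanism producing cycles that differ from the extremal cycle in many edges at once, yielding long \emph{runs} of consecutive lengths of a given parity: roughly, the minimum-degree condition is used to build paths of every admissible length in a long interval between suitable endpoints, and these are spliced with arcs of the longest even (resp.\ odd) cycle, the additive constant $K$ absorbing the losses at the ends; the lower threshold $K$ for odd lengths is unavoidable because short odd cycles may genuinely be absent (large odd girth), exactly as you observe. Your parity bookkeeping for chords and detours, the K\H{o}v\'ari--S\'os--Tur\'an remark producing $C_4$, and the explanation of the even/odd asymmetry are all correct, but they are the easy fringe of the statement. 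Note also that an iterated variant of your scheme (repeatedly shortening the current cycle by one chord) does not close the gap either, since one cannot guarantee a shortening by exactly $2$ at each step, and parity control is lost along the way. As written, the proposal does not contain the key idea needed to pass from ``many lengths occur'' to ``no gaps in the interval.''
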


Another tools are the following two spectral inequalities, of which the second one was originally
conjectured by Guo et al. \cite{GWL19}.

\begin{theoremA}[Hong \cite{H93}]\label{Thm:Hong}
Let $G$ be a graph on $n$ vertices and $m$ edges. If $\delta(G)\geq1$ then
$\rho(G)\leq \sqrt{2m-n+1}$.
\end{theoremA}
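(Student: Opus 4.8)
The plan is to bound $\rho(G)^2$ directly by reading the eigenvalue equation off at the vertex carrying the largest eigenvector entry, and then to convert the resulting count of length-two walks into an edge count by spending the slack provided by the hypothesis $\delta(G)\ge 1$. First I would invoke the Perron--Frobenius theorem to fix a nonnegative eigenvector $x=(x_1,\dots,x_n)$ associated with $\rho:=\rho(G)$, scaled so that $\max_i x_i=1$, and let $u$ be a vertex with $x_u=1$. If $G$ is disconnected, such an $x$ exists supported on a component of maximum spectral radius, and all entries still satisfy $0\le x_i\le 1$, so no separate connectivity reduction is needed.

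The key identity comes from $A^2x=\rho^2 x$ evaluated at $u$:
\begin{equation*}
\rho^2 = \rho^2 x_u = (A^2x)_u = \sum_{v\sim u}\sum_{w\sim v} x_w.
\end{equation*}
Since every entry satisfies $x_w\le 1$, the inner double sum is at most the number of length-two walks leaving $u$, which gives
\begin{equation*}
\rho^2 \le \sum_{v\sim u} d_v,
\end{equation*}
where $d_v$ denotes the degree of $v$. In this way the spectral estimate is reduced to the purely combinatorial task of bounding the sum of degrees over the neighbourhood of a single vertex.

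The minimum-degree hypothesis enters in the final step. Writing $2m=\sum_i d_i=d_u+\sum_{v\sim u}d_v+\sum_{w\ne u,\,w\not\sim u}d_w$ and isolating the neighbourhood sum yields
\begin{equation*}
\sum_{v\sim u} d_v = 2m - d_u - \sum_{\substack{w\ne u\\ w\not\sim u}} d_w.
\end{equation*}
There are exactly $n-1-d_u$ vertices distinct from $u$ and non-adjacent to $u$, and each has degree at least $1$ because $\delta(G)\ge 1$; hence $\sum_{w\ne u,\,w\not\sim u}d_w\ge n-1-d_u$. Substituting, the $d_u$ terms cancel and one obtains $\sum_{v\sim u}d_v\le 2m-n+1$, whence $\rho^2\le 2m-n+1$ and the theorem follows.

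I expect the only genuinely delicate point to be the justification of the eigenvector setup: one must ensure that a nonnegative eigenvector for $\rho$ with a strictly positive maximum entry exists even when $G$ fails to be connected, and that the chosen $u$ really attains the maximum so that the normalization $x_u=1$ is legitimate. Everything after the reduction $\rho^2\le\sum_{v\sim u}d_v$ is elementary counting; the conceptual content lies entirely in passing through $A^2$ at the extremal vertex and then using the $n-1-d_u$ non-neighbours, each of degree at least one, to absorb exactly the deficit $n-1$ needed to reach the bound.
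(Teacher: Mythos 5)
Your proof is correct, but there is nothing in the paper to compare it against: the paper states this result as a known theorem of Hong \cite{H93} and uses it as a black box, giving no proof. Your argument is in fact the standard short proof of Hong's bound, and every step checks out. The spectral step is sound: with the nonnegative eigenvector normalized so that $x_u=\max_i x_i=1$, the identity $\rho^2=(A^2x)_u=\sum_{v\sim u}\sum_{w\sim v}x_w$ together with $0\le x_w\le 1$ gives $\rho^2\le\sum_{v\sim u}d_v$, and your treatment of the disconnected case (take the eigenvector supported on a component attaining $\rho(G)$, extended by zeros, so all entries lie in $[0,1]$ and the eigenvalue equation still holds globally) is a legitimate way to avoid a connectivity reduction. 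The counting step is where the hypothesis $\delta(G)\ge 1$ is genuinely used and is correctly deployed: from $2m=d_u+\sum_{v\sim u}d_v+\sum_{w\ne u,\,w\not\sim u}d_w$ and the fact that the $n-1-d_u$ non-neighbours of $u$ each have degree at least $1$, you get $\sum_{v\sim u}d_v\le 2m-d_u-(n-1-d_u)=2m-n+1$, with the $d_u$ terms cancelling exactly as you say. As a sanity check, the bound is tight for $K_n$, where $2m-n+1=(n-1)^2=\rho^2$. The only cosmetic caveat is that your bound $\rho^2\le\sum_{v\sim u}d_v$ quietly uses $x_w\le x_u$ rather than any positivity of $x$ at $u$'s non-neighbours, which is exactly why the zero entries in the disconnected case cause no trouble; you flag this yourself, and it is handled correctly.
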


\begin{theoremA}[Sun and Das \cite{SD20}]\label{Thm:Sun-Das}
Let $G$ be a graph with minimum degree $\delta(G)\geq 1$. For any
$v\in V(G)$, we have $\rho^2(G-v)\geq\rho^2(G)-2d(v)+1$.
\end{theoremA}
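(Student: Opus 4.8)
The plan is to fix the vertex $v$, let $x$ be the unit Perron eigenvector of $A=A(G)$ with $Ax=\rho x$ where $\rho=\rho(G)$, and write $A$ in block form with respect to the partition $V(G)=(V(G)\setminus\{v\})\cup\{v\}$, say $A=\left(\begin{smallmatrix}B & b\\ b^{T} & 0\end{smallmatrix}\right)$, where $B=A(G-v)$, $b$ is the indicator vector of $N(v)$ (so $\|b\|^{2}=d(v)$ and $b^{T}x|_{V-v}=\sum_{u\sim v}x_{u}=\rho x_{v}$), and $z$ denotes the restriction of $x$ to $V(G)\setminus\{v\}$. Since $\rho(G-v)^{2}$ is the largest eigenvalue of the nonnegative symmetric matrix $B^{2}$, the Rayleigh principle gives $\rho(G-v)^{2}\ge \|Bz\|^{2}/\|z\|^{2}$. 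Reading off the top block of $Ax=\rho x$ yields the key identity $Bz=\rho z-x_{v}b$, and expanding $\|Bz\|^{2}=\rho^{2}\|z\|^{2}-2\rho^{2}x_{v}^{2}+d(v)x_{v}^{2}$ and dividing by $\|z\|^{2}=1-x_{v}^{2}$ gives
\[
\rho(G-v)^{2}\ \ge\ \rho^{2}-\frac{(2\rho^{2}-d(v))\,x_{v}^{2}}{1-x_{v}^{2}}.
\]
Thus the theorem reduces to the single pointwise eigenvector estimate $(2\rho^{2}+d(v)-1)\,x_{v}^{2}\le 2d(v)-1$, equivalently $\sum_{u\ne v}x_{u}^{2}\ge (2\rho^{2}-d(v))x_{v}^{2}/(2d(v)-1)$.

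The trivial reductions come first: if $\rho(G-v)=\rho(G)$ the statement is immediate because $2d(v)-1\ge1$, so I may assume $\rho(G-v)<\rho$ and, by passing to the component of $G$ containing $v$, that $G$ is connected (hence $x>0$). I would then split on the size of $d(v)$. When $d(v)\ge\rho$, the plain Cauchy--Schwarz bound $\rho^{2}x_{v}^{2}=\big(\sum_{u\sim v}x_{u}\big)^{2}\le d(v)\sum_{u\sim v}x_{u}^{2}\le d(v)(1-x_{v}^{2})$ gives $x_{v}^{2}\le d(v)/(\rho^{2}+d(v))$, and a direct computation shows this already forces $(2\rho^{2}+d(v)-1)x_{v}^{2}\le 2d(v)-1$ exactly when $d(v)\ge\rho$ (equivalently it yields the weaker bound $\rho(G-v)^{2}\ge\rho^{2}-2d(v)+d(v)^{2}/\rho^{2}$, which suffices in this regime).

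The substance is the case $d(v)<\rho$, where Cauchy--Schwarz alone loses precisely the additive constant we need, so the argument must exploit the Perron mass carried by the non-neighbours of $v$. A clean tool comes from the second-neighbourhood equation $(\rho^{2}-d(v))x_{v}=\sum_{w\ne v}|N(v)\cap N(w)|\,x_{w}$: bounding $|N(v)\cap N(w)|\le d(v)-1$ for $w\sim v$ and $\le d(v)$ for $w\not\sim v$, and using $\sum_{w\sim v}x_{w}=\rho x_{v}$, I obtain the linear mass bound
\[
Y:=\sum_{w\not\sim v,\,w\ne v}x_{w}\ \ge\ \frac{(\rho+1)(\rho-d(v))}{d(v)}\,x_{v}.
\]

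I expect the main obstacle to be converting this $\ell_{1}$-information (together with the quadratic Cauchy--Schwarz information) into the required $\ell_{2}$-estimate $\sum_{u\ne v}x_{u}^{2}\ge (2\rho^{2}-d(v))x_{v}^{2}/(2d(v)-1)$: the naive passage from the mass $Y$ to $\sum_{w\not\sim v}x_{w}^{2}$ via Cauchy--Schwarz is too lossy because the number of non-neighbours enters. I therefore anticipate having to feed the individual eigen-equations $\rho x_{w}=\sum_{u\sim w}x_{u}$ of the non-neighbours back into the estimate and track how the eigenvector mass redistributes, all the while checking that no slack is wasted against the two extremal configurations, namely $K_{n}$ (the boundary case $d(v)=\rho$) and a leaf of a star $K_{1,n-1}$ (the case $d(v)=1$), for both of which every inequality displayed above holds with equality.
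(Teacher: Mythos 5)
Your attempt has a genuine gap, and before naming it I should note that there is nothing in the paper to compare against: Theorem~\ref{Thm:Sun-Das} is imported verbatim from Sun and Das \cite{SD20} and used as a black box (the paper's only contribution in this direction is Lemma~\ref{Lem:Sun-Das}, which removes the hypothesis $\delta(G)\geq 1$ by deleting isolated vertices). So your proof must stand on its own, and it does not. The parts you actually carry out are correct: the block identity $Bz=\rho z-x_vb$, the Rayleigh bound $\rho^2(G-v)\geq \|Bz\|^2/\|z\|^2$, the reduction of the theorem to the pointwise estimate $(2\rho^2+d(v)-1)x_v^2\leq 2d(v)-1$, and the complete treatment of the regime $d(v)\geq\rho$ via $x_v^2\leq d(v)/(\rho^2+d(v))$ all check out, and your reduction is indeed sharp for $K_n$ and for a leaf of $K_{1,n-1}$. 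But the regime $d(v)<\rho$ is where the entire theorem lives: the pointwise bound you need there is at least as strong as the statement being proved, is tight at the star, and is strictly stronger than the Cauchy--Schwarz bound precisely in this regime. Your proposal does not prove it; it only derives the second-neighbourhood identity $(\rho^2-d(v))x_v=\sum_{w\neq v}|N(v)\cap N(w)|\,x_w$ and the $\ell_1$ mass bound $Y\geq (\rho+1)(\rho-d(v))x_v/d(v)$, and then explicitly defers the $\ell_1\to\ell_2$ conversion to an unspecified future argument (``I anticipate having to feed the individual eigen-equations back\dots''). That sentence is an admission that the decisive step is absent, not a sketch with checkable content.

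To see concretely that your derived inequalities cannot close the gap by themselves, take $d(v)=1$, so the target becomes $\sum_{w\not\sim v,\,w\neq v}x_w^2\geq(\rho^2-1)x_v^2$, while Cauchy--Schwarz applied to your mass bound $Y\geq(\rho^2-1)x_v$ yields only $\sum_{w\not\sim v,\,w\neq v}x_w^2\geq(\rho^2-1)^2x_v^2/(n-2)$; for an endpoint of a long path ($\rho<2$, $n$ large) this is weaker than the target by a factor of order $n$, even though the target itself holds comfortably there. So the conversion is not merely ``lossy'' --- it fails by an unbounded factor, and no mechanism in your proposal recovers it. A further unaddressed risk: you have not established that the pointwise estimate $(2\rho^2+d(v)-1)x_v^2\leq 2d(v)-1$ is true at all (it holds in every family one readily tests, with equality exactly at $K_n$ and star leaves, but if it failed anywhere, the truncated Perron vector would be the wrong test vector and your whole Rayleigh-quotient route would need replacing). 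In short: right reduction, correct easy case, but the main case --- the actual content of the Sun--Das theorem --- remains unproved.
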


\begin{theoremA}[Nosal \cite{N70}]\label{Thm:Nosal}
Let $G$ be a graph on $m$ edges. If $\rho(G)>\sqrt{m}$,
then $G$ contains a triangle.
\end{theoremA}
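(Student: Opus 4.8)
The plan is to prove the contrapositive: if $G$ is triangle-free, then $\rho(G)\leq\sqrt{m}$. The point I would exploit is that triangle-freeness is precisely the statement that $G$ has no closed walk of length three, which translates into the vanishing of the third moment of the spectrum. So I would work entirely with the eigenvalues of the adjacency matrix rather than with a Perron eigenvector.

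First I would set up notation: let $A$ be the adjacency matrix of $G$ and let $\lambda_1\geq\lambda_2\geq\cdots\geq\lambda_n$ be its (real) eigenvalues, so that $\rho(G)=\lambda_1$ and, since $\lambda_1$ is the spectral radius of a symmetric nonnegative matrix, $|\lambda_i|\leq\lambda_1$ for every $i$. Then I would record the two standard trace identities $\operatorname{tr}(A^2)=\sum_i\lambda_i^2=2m$ and $\operatorname{tr}(A^3)=\sum_i\lambda_i^3=6t$, where $t$ is the number of triangles in $G$ (each triangle accounting for three choices of starting vertex and two orientations). Triangle-freeness gives $t=0$, hence $\sum_i\lambda_i^3=0$.

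The heart of the argument is to convert this vanishing power-sum into a bound on $\lambda_1$. From $\sum_i\lambda_i^3=0$ I would isolate $\lambda_1^3=-\sum_{i\geq2}\lambda_i^3\leq\sum_{i\geq2}|\lambda_i|^3$, and then estimate each term via $|\lambda_i|^3=|\lambda_i|\,\lambda_i^2\leq\lambda_1\lambda_i^2$, using $|\lambda_i|\leq\lambda_1$. This yields $\lambda_1^3\leq\lambda_1\sum_{i\geq2}\lambda_i^2$. Assuming $G$ has at least one edge (the edgeless case is trivial, as then $m=0$ and $\rho(G)=0$), we have $\lambda_1>0$, so dividing by $\lambda_1$ gives $\lambda_1^2\leq\sum_{i\geq2}\lambda_i^2=2m-\lambda_1^2$. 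Rearranging gives $\lambda_1^2\leq m$, that is $\rho(G)\leq\sqrt{m}$, which establishes the contrapositive and hence the theorem.

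I do not expect a serious obstacle here; the proof is short once one thinks to use the third moment $\operatorname{tr}(A^3)$. The only steps requiring a little care are the sign manipulation, where $-\sum_{i\geq2}\lambda_i^3\leq\sum_{i\geq2}|\lambda_i|^3$ must be chained with $|\lambda_i|^3\leq\lambda_1\lambda_i^2$ in the correct order, and the separation of the degenerate edgeless graph so that dividing by $\lambda_1$ is legitimate. A full equality analysis, characterizing triangle-free graphs with $\rho(G)=\sqrt{m}$ (such as complete bipartite graphs), would require tracking when $|\lambda_i|\in\{0,\lambda_1\}$ holds for all $i$, but this is not needed for the stated strict-inequality conclusion.
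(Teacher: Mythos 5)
Your proof is correct, and every step checks out: the trace identities $\operatorname{tr}(A^2)=2m$ and $\operatorname{tr}(A^3)=6t$, the termwise bound $|\lambda_i|^3\leq\lambda_1\lambda_i^2$ (valid since $|\lambda_i|\leq\lambda_1$ for a symmetric nonnegative matrix), and the division by $\lambda_1>0$ after separating the edgeless case all hold, yielding $2\lambda_1^2\leq 2m$ as claimed. Note, however, that the paper does not prove this statement at all: it is quoted as background (Theorem E) with only a citation to Nosal's 1970 thesis, so there is no proof in the paper to compare yours against. What you have written is the standard third-moment argument for Nosal's theorem, essentially the classical proof found in the literature, and it is a perfectly adequate self-contained justification of the cited result; within the paper itself the theorem is used only once, in Case B, to conclude that the subgraph $H$ with $\rho(H)>\sqrt{\lfloor h^2/4\rfloor}\geq\sqrt{e(H)}$ (via Mantel's bound) contains a triangle and is hence non-bipartite, and your proof supports that application without change.
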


Our proof of
Theorem \ref{Thm:SpectraConsecutiveCycles} uses the following three lemmas.
A graph is called a \emph{theta-graph} if it consists of three paths starting and
ending with two same vertices and internally vertex-disjoint. The first
lemma is very basic.
\begin{lem}\label{Lem:Theta}
Let $G$ be a graph containing no theta-graphs. Then each block of $G$
is an isolated vertex, or an edge (a $K_2$), or a cycle.
\end{lem}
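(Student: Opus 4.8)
The plan is to argue at the level of blocks and to prove the statement in contrapositive form. Since theta-freeness is hereditary (any subgraph of a theta-free graph is again theta-free), every block of $G$ is itself theta-free, so it suffices to classify the theta-free blocks. Recall that a block of a graph is either an isolated vertex, a single edge $K_2$ (a bridge), or a maximal $2$-connected subgraph on at least three vertices; the first two types already appear in the conclusion. Hence the whole lemma reduces to the claim that \emph{every $2$-connected graph $B$ on at least three vertices that contains no theta-graph is a cycle}. I would establish this by proving the contrapositive: a $2$-connected graph $B$ on at least three vertices that is \emph{not} a cycle must contain a theta-graph.

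To prove the contrapositive, first note that $2$-connectivity forces $\delta(B)\geq 2$, and if every vertex had degree exactly $2$ then $B$, being connected, would be a single cycle. So, assuming $B$ is not a cycle, fix any cycle $C$ in $B$ (one exists since $B$ is $2$-connected with at least three vertices). Because $B\neq C$, exactly one of two cases occurs. In the first case $C$ has a chord, i.e.\ an edge $xy\in E(B)\setminus E(C)$ with $x,y\in V(C)$; then the chord together with the two arcs of $C$ between $x$ and $y$ yields three internally vertex-disjoint $xy$-paths, which is a theta-graph. In the second case there is a vertex $w\notin V(C)$, and here I would invoke the fan version of Menger's theorem: since $B$ is $2$-connected there are two paths from $w$ to $V(C)$ that meet only at $w$ and reach $C$ exactly at their distinct endpoints $u,v$. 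Their union is a $u$--$v$ path $P$ whose internal vertices avoid $C$, and $P$ together with the two arcs of $C$ from $u$ to $v$ again forms a theta-graph. In both cases $B$ contains a theta-graph, contradicting theta-freeness; this proves the claim and hence the lemma.

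The argument is elementary, and the only real input is the fan lemma used in the second case to attach $w$ to the cycle by an ear—this is exactly where $2$-connectivity is indispensable, and it is the step I would state most carefully. (An ear-decomposition argument would give the same conclusion, since the first ear added to the initial cycle already produces a theta-graph, but the two-case cycle argument is more self-contained.) One should also verify the mild non-degeneracy required for a genuine simple theta-graph, namely that at most one of the three paths has length $1$: in the chord case both arcs have length at least $2$, and in the ear case the path $P$ has length at least $2$ because it passes through $w$. Everything else is routine bookkeeping about the block decomposition.
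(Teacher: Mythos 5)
Your proof is correct and complete. The paper itself offers no proof of this lemma at all (it is dismissed as ``very basic''), and your argument---reducing to $2$-connected blocks and then showing that a $2$-connected non-cycle contains a theta via the chord/fan-lemma dichotomy, with the check that at most one of the three paths has length $1$---is exactly the standard argument the authors presumably had in mind. The only nitpick is that ``exactly one of two cases occurs'' should read ``at least one'' (a chord and a vertex outside $C$ can coexist), which is harmless since either case yields a theta-graph.
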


\begin{lem}\label{Lem:Erdos-Gall-Voss}
Let $G$ be a graph on $n$ vertices.
If $ec(G)\leq 2k$ where $k\geq 1$ is an integer,
then $e(G)\leq \frac{(2k+1)(n-1)}{2}$.
\end{lem}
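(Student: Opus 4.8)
The plan is to prove the bound by induction on $n$, reducing everything to the $2$-connected case via the block structure; this reduction is routine, and the genuine content is the $2$-connected estimate. The extremal configuration to keep in mind is a ``windmill'' of copies of $K_{2k+1}$ all sharing a single common vertex: each block is $K_{2k+1}$ (whose longest even cycle is $2k$ since $2k+1$ is odd), every cycle is confined to a single block, so $ec=2k$, and a short computation gives exactly $\frac{(2k+1)(n-1)}{2}$ edges. This pins down where equality must come from and guides the induction.

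First I would dispose of the non-$2$-connected cases. If $G$ is disconnected, apply the inductive hypothesis to each component and sum; since each component drops the count by at least $\frac{2k+1}{2}$, the total stays below $\frac{(2k+1)(n-1)}{2}$. If $G$ is connected with a cut vertex $v$, split $G$ along $v$ into two overlapping subgraphs $G_1,G_2$ with $V(G_1)\cap V(G_2)=\{v\}$ and $|G_1|+|G_2|=n+1$; every edge lies in exactly one $G_i$, each $G_i$ has $ec(G_i)\le ec(G)\le 2k$ and fewer than $n$ vertices, so the inductive hypothesis gives $e(G)=e(G_1)+e(G_2)\le \frac{2k+1}{2}\bigl((|G_1|-1)+(|G_2|-1)\bigr)=\frac{(2k+1)(n-1)}{2}$. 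Thus it remains to treat a $2$-connected graph $H$ with $ec(H)\le 2k$, where I would distinguish three ranges of $n=|H|$.

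For $n\le 2k+1$ we simply use $e(H)\le\binom{n}{2}=\frac{n(n-1)}{2}\le\frac{(2k+1)(n-1)}{2}$. For the boundary value $n=2k+2$, note that a Hamilton cycle would be an even cycle of length $2k+2>2k$, which is impossible; hence $H$ is non-Hamiltonian and Theorem \ref{Thm:Ore} yields $e(H)\le\binom{2k+1}{2}+1=2k^2+k+1$, which is at most $\frac{(2k+1)^2}{2}$ for every $k\ge 1$. For $n\ge 2k+3$ with $k\ge 2$, I claim $\delta(H)\le k$: otherwise $\delta(H)\ge k+1$ and, applying Theorem \ref{Thm:Voss-Zuluage}(1) with parameter $\kappa=k+1\ge 3$ (valid since $n\ge 2(k+1)+1$), we would obtain $ec(H)\ge 2k+2$, contradicting the hypothesis. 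Deleting a vertex $v$ with $d(v)\le k$ preserves $ec\le 2k$, so the inductive hypothesis gives $e(H)\le e(H-v)+k\le\frac{(2k+1)(n-2)}{2}+k=\frac{(2k+1)(n-1)}{2}-\frac12$, closing the induction.

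The only case outside this scheme is $k=1$, where $\kappa=k+1$ is too small for Theorem \ref{Thm:Voss-Zuluage}, and this is exactly where Lemma \ref{Lem:Theta} is needed: $ec(H)\le 2$ means $H$ has no even cycle, while any theta-graph does contain one (among three internally disjoint paths of lengths $a,b,c$ two share a parity, and their union is an even cycle), so $H$ is theta-free; by Lemma \ref{Lem:Theta} a $2$-connected such $H$ is a single odd cycle, for which $e(H)=n\le\frac{3(n-1)}{2}$. The main obstacle is the $2$-connected step for $n\ge 2k+3$: the crux is converting the global hypothesis $ec\le 2k$ into the existence of a single low-degree vertex, and it is precisely Theorem \ref{Thm:Voss-Zuluage} that supplies a vertex of degree at most $k$ once $H$ is large enough; the lone value $n=2k+2$ slips just past the hypothesis of that theorem and must be salvaged separately by the Hamiltonicity/Ore argument above.
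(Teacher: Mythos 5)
Your proof is correct and rests on essentially the same ingredients as the paper's: the trivial bound for $n\le 2k+1$, Ore's theorem (Theorem \ref{Thm:Ore}) at $n=2k+2$, the Voss--Zuluaga theorem (Theorem \ref{Thm:Voss-Zuluage}) to produce a vertex of degree at most $k$ when $k\ge 2$, and the theta-graph lemma (Lemma \ref{Lem:Theta}) for $k=1$. The only difference is organizational: you reduce to the $2$-connected case up front (which lets you replace the paper's block-counting argument for $k=1$ by the observation that a $2$-connected theta-free graph is a single odd cycle), whereas the paper deletes low-degree vertices first and invokes connectivity considerations only at the end.
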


\begin{proof}
If $n\leq 2k+1$, then $e(G)\leq \binom{n}{2}\leq \frac{(2k+1)(n-1)}{2}$.
If $n=2k+2$, then by Theorem \ref{Thm:Ore}, we have $e(G)\leq \binom{2k+1}{2}+1\leq \frac{(2k+1)(n-1)}{2}$.
Next, we assume $n\geq 2k+3$.

Let $k=1$. We shall prove that if a graph on $n$ vertices contains
no even cycles then $e(G)\leq \frac{3(n-1)}{2}$.
By Lemma \ref{Lem:Theta}, every block of $G$ is an isolated vertex, an edge or an odd cycle. Let $c$
be the number of blocks which are odd cycles. By induction on $c$,
we can prove that
$e(G)\leq n+c-1\leq n-1+\frac{n-1}{2}=\frac{3(n-1)}{2}$.
In the following, we suppose $k\geq 2$.

Let $v\in V(G)$ with $d_G(v)=\delta(G)$, and $G':=G-v$. Note that $G'$ satisfies that
$|G'|\geq2k+2$ and $G'$ contains no even cycle of length more than $2k$.
By induction hypothesis, if $d(v)\leq k$, then we have
$e(G)=e(G')+\delta\leq \frac{(2k+1)(n-2)}{2}+k<\frac{(2k+1)(n-1)}{2}$,
as required. Thus, $\delta(G)\geq k+1\geq 3$. If $G$ is 2-connected,
then by Theorem \ref{Thm:Voss-Zuluage},  $G$ contains an even cycle
of length at least $2k+2$, a contradiction. Thus, $G$ contains a cut-vertex
or is disconnected. In such a case, let $G=G_1\cup G_2$,
where $|V(G_1)\cap V(G_2)|\leq 1$, and we use induction to $G_1$
and $G_2$ to get the required result.
The proof is complete.
\end{proof}

\begin{lem}\label{Lem:Sun-Das}
Let $G$ be a graph. For any $v\in V(G)$, we have
$\rho^2(G)\leq \rho^2(G-v)+2d_G(v).$
\end{lem}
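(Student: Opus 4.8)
The plan is to obtain the inequality from Theorem~\ref{Thm:Sun-Das}, whose sole restriction is the hypothesis $\delta(G)\ge 1$; in fact, wherever that theorem applies it even delivers the slightly sharper bound $\rho^2(G)\le\rho^2(G-v)+2d_G(v)-1$. To remove the degree hypothesis I would lean on one elementary fact: deleting an isolated vertex alters neither the adjacency spectral radius of a graph nor the degree of any surviving vertex, so all isolated vertices may be stripped away for free.

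First I would handle the degenerate case in which $v$ is itself isolated. Then $d_G(v)=0$ and the subgraph on the remaining vertices is unaffected by deleting $v$, so $\rho(G-v)=\rho(G)$ and the claimed inequality holds with equality (the edgeless case is immediate as well, since then $\rho(G)=0$). Otherwise $d_G(v)\ge 1$, and I would pass to the graph $G_0$ obtained by deleting every isolated vertex of $G$. Then $v\in V(G_0)$, $d_{G_0}(v)=d_G(v)$, $\delta(G_0)\ge 1$, and $\rho(G_0)=\rho(G)$, so Theorem~\ref{Thm:Sun-Das} applies at $v$ and yields $\rho^2(G_0)\le\rho^2(G_0-v)+2d_{G_0}(v)-1$.

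It remains only to transfer this back to $G$. The key observation is that $G_0-v$ and $G-v$ have the very same edge set: they differ exactly by the isolated vertices of $G$, and those vertices acquire no neighbours when $v$ is removed, so they stay isolated in $G-v$. Hence $\rho(G_0-v)=\rho(G-v)$, and substituting this together with $\rho(G_0)=\rho(G)$ and $d_{G_0}(v)=d_G(v)$ gives $\rho^2(G)\le\rho^2(G-v)+2d_G(v)-1<\rho^2(G-v)+2d_G(v)$. I anticipate no genuine obstacle here; the only point that warrants care is the bookkeeping of isolated vertices, namely verifying that deleting $v$ creates no new edges, so that ``isolated in $G$'' persists to ``isolated in $G-v$'' and the two spectral radii truly coincide. (Should one prefer a self-contained route, writing the unit Perron vector of $G$ as $(x_v,\hat x)$ and setting $t=x_v^2$, the eigenvalue equation at $v$ gives $\hat x^{\top}A(G-v)\hat x=\rho(G)(1-2t)$ while Cauchy--Schwarz gives $\rho^2(G)\,t\le d_G(v)(1-t)$; combining these two relations, after splitting on whether $t\le\frac12$, reduces the claim to the trivial inequality $t^2\ge 0$.)
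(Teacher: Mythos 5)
Your proposal is correct and follows essentially the same route as the paper's own proof: handle the case $d_G(v)=0$ directly, and otherwise strip all isolated vertices to restore the hypothesis $\delta\geq 1$ and apply Theorem~\ref{Thm:Sun-Das}, noting that isolated vertices affect neither the spectral radius nor $d(v)$. The only cosmetic difference is that the paper first splits on whether $\delta(G)\geq 1$, a case your argument absorbs automatically (there $G_0=G$).
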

\begin{proof}
If $\delta(G)\geq 1$, then by Theorem \ref{Thm:Sun-Das}, the result follows.
Suppose that $\delta(G)=0$. If $d_G(v)=0$, then $\rho^2(G)=\rho^2(G-v)\leq \rho^2(G-v)+2d_G(v)$.
Now assume that $d_G(v)\geq 1$. Let $I$ be the set of isolated vertices in $G$ and $G':=G-I$.
Then $\delta(G')\geq 1$. By Theorem \ref{Thm:Sun-Das},
$\rho^2(G)=\rho^2(G')\leq \rho^2(G'-v)+2d_{G'}(v)=\rho^2(G-v)+2d_{G}(v)$.
This proves Lemma \ref{Lem:Sun-Das}.
\end{proof}

Now we give the proof of Theorem \ref{Thm:SpectraConsecutiveCycles}.

\noindent
{\bf Proof of Theorem \ref{Thm:SpectraConsecutiveCycles}.}
If $G$ is disconnected, then we use $\omega(G)$
to denote the number of components in $G$.
Then we can add $\omega(G)-1$ edges to
make it connected. Note that $\rho(G')>\rho(G)$.
For any integer $k\geq 3$, $G$ contains a $C_{\ell}$
if and only if $G'$ contains a  $C_{\ell}$.
Thus, we can assume $G$ is connected.

By Theorem \ref{Thm:Hong}, we have
$$\frac{n^2-1}{4}\leq \rho^2(G)\leq
2m-n+1.$$ One can derive that
$2m\geq\frac{n^2+4n-5}{4}$. Thus, the average degree
$d(G):=\frac{2m}{n}>\frac{n}{4}$.

Let $H$ be a subgraph of $G$ defined by a sequence of graphs
$G_0,G_1,\ldots,G_k$ such that:\\
(1) $G=G_0$ and $H=G_k$;\\
(2) for every $i\in[0,k-1]$, there is $v_i\in V(G_i)$ such that
$d_{G_i}(v_i)\leq\frac{n}{8}$ and $G_{i+1}=G_i-v_i$;\\
(3) for every $v\in V(G_k)$, $d_{G_k}(v)>\frac{n}{8}$.\\
We claim that $d(H)>\frac{n}{4}$. Suppose not the case. Then there
is a least integer $i\in[1,k]$ with $d(G_i)\leq\frac{n}{4}$. This implies
that
$$d(G_{i-1})=\frac{2d_{G_{i-1}}(v_{i-1})+|G_i|d(G_i)}{|G_i|+1}\leq\frac{n}{4},$$
a contradiction. Thus, we conclude that $d(H)>\frac{n}{4}$ and
$\delta(H)>\frac{n}{8}$.

\underline{Case A: Even cycle.} Note that
$e(H)=\frac{d(H)|H|}{2}>\frac{\frac{n}{4}(|H|-1)}{2}$. By Lemma
\ref{Lem:Erdos-Gall-Voss}, $ec(H)>\left\lfloor\frac{n}{4}\right\rfloor\geq \frac{n}{4}-1$. Recall that
$\delta(H)>\frac{n}{8}$. By Theorem \ref{Thm:GHS}, $H$ contains all
even cycles $C_{\ell}$ with $\ell \in [4,ec(H)-K]$ if
$|H|\geq45\cdot8^4\cdot K$, where $K=K(\frac{1}{8})=\frac{7.5\times 10^{5}}{(\frac{1}{8})^5}$
corresponds to the constant $K(c)$ in
Theorem \ref{Thm:GHS}. Clearly $|H|>\frac{n}{4}$. Let $n_1$ be an
integer satisfying
$$({\rm i})\ \frac{n_1}{4}\geq 45\cdot8^4\cdot K;\  ({\rm ii})\ \varepsilon n_1\geq K+1.$$
Now if $n\geq n_1$ then $G$ contains $C_{\ell}$ with every even
$\ell\in[4,(\frac{1}{4}-\varepsilon)n]$.

\underline{Case B: Odd cycle.} Set $h=|H|$. By Lemma
\ref{Lem:Sun-Das}, we have
$$
\rho^2(G)\leq\rho^2(H)+2\sum_{i=0}^{k-1}d_{G_i}(v_i)\leq\rho^2(H)+2k\cdot\frac{n}{8}=\rho^2(H)+\frac{kn}{4},
$$
where $G_i$, $v_i$ are those in the definition of $H$, and $k=n-h$.
This implies that $\rho(H)\geq\frac{\sqrt{nh-1}}{2}$. Since $h\leq n$ and
$\rho(H)>\frac{\sqrt{nh-1}}{2}\geq\sqrt{\lfloor\frac{h^2}{4}\rfloor}$, by Theorem \ref{Thm:Nosal}
and Mantel's theorem, $H$ contains a triangle, and so is
non-bipartite.

Let $F$ be a subgraph of $H$ defined by a sequence of graphs
$H_0,H_1,\ldots,H_s$ such that:\\
(1) $H_0=H$ and $H_s=F$;\\
(2) for every $i\in[0,s-1]$, there is a cut-vertex $v_i$ of $H_i$ and $H_{i+1}:=H_i-v_i$;\\
(3) $H_s$ has no cut-vertex.\\
Note that $w(H_{i+1})\geq w(H_i)+1$. Clearly
$w(H)\leq 8$, for otherwise $H$ will have a vertex of degree less
than $\frac{n}{8}$. We claim that $w(F)\leq 8$. Suppose to the
contrary that there is a least integer $i\in [0,s]$ such that $w(H_i)\geq 9$. Notice
that for $i\leq 8$, $\delta(H_i)>\frac{n}{8}-8$. As
$w(H_i)\geq 9$, $H_i$ has a vertex with degree less that
$\frac{|H_i|}{9}<\frac{n}{9}$, a contradiction when $n\geq577$. Thus
we conclude that $w(F)\leq 8$, and specially, $|V(F)|\geq h-7$.

By Lemma \ref{Lem:Sun-Das}, we have
$$
\rho^2(H)\leq\rho^2(F)+2\sum_{i=0}^{s-1}d_{H_i}(v_i)\leq\rho^2(F)+2s(h-1)\leq\rho^2(F)+14(h-1).
$$
Since $d(H)>\frac{n}{4}$, we obtain $h>\frac{n}{4}+1$. Thus,
\begin{equation*}
\begin{split}
\rho(F) & \geq\sqrt{\rho^2(H)-14(h-1)}\geq\sqrt{\frac{nh-1}{4}-14(h-1)}\\
        & =\sqrt{\left(\frac{n}{4}-14\right)h+\frac{55}{4}}>\sqrt{\left(\frac{n}{4}-14\right)\left(\frac{n}{4}+1\right)+\frac{55}{4}}\geq\frac{n}{4}-7\\
\end{split}
\end{equation*}
when $n\geq197$.

Recall that $F$ has no cut-vertex and $\delta(F)\geq \delta(H)-7\geq \frac{n}{8}-7\geq 2$, i.e., every component of $F$ is
2-connected. Let $F_1$ be a component of $F$ with
$\rho(F_1)=\rho(F)$. Thus we have $\delta(F_1)\geq\frac{n}{8}-7$ and
$\rho(F_1)>\frac{n}{4}-7$. Specially $|F_1|>\frac{n}{4}-6$.

We claim that $\delta(F_1)\geq\frac{|F_1|}{8}$. Recall that
$\delta(H)\geq\frac{n}{8}\geq\frac{|H|}{8}$, we assume that $F_1\neq
H$. This implies that $F$ has a second component, say $F_2$. Since
$\delta(F)\geq\frac{n}{8}-s$, we have $|F_2|\geq\frac{n}{8}-s+1$
(here $s$ is the subscript of $F=H_s$). This implies that
$|F_1|\leq h-s-(\frac{n}{8}-s+1)<\frac{7h}{8}$. Thus
$\delta(F_1)\geq\frac{n}{8}-7\geq\frac{7n/8}{8}\geq\frac{|F_1|}{8}$
when $n\geq 448$.

Now we show that $F_1$ is non-bipartite. Recall that $H$ is
non-bipartite. So we assume that $F_1\neq H$. By the analysis above
we have $|F_1|<\frac{7h}{8}$. Thus
$$\rho^2(F_1)=\rho^2(F)\geq\left(\frac{n}{4}-14\right)h+\frac{55}{4}\geq\left(\frac{h}{4}-14\right)h+\frac{55}{4}>\frac{(7h/8)^2}{4}>\frac{|F_1|^2}{4}$$
when $h\geq 238$. Recall that $h>\frac{n}{4}+1$, we have that $F$ is
non-bipartite when $n\geq 944$.

By Theorem \ref{Thm:Voss-Zuluage}, $oc(F_1)\geq
\min\{2\delta(F_1)-1,|F_1|\}\geq \min\{\frac{n}{4}-15,\rho(F_1)\}=\frac{n}{4}-15$. By Theorem \ref{Thm:GHS}, $F_1$
contains all odd cycles $C_{\ell}$ for $\ell\in[K,\frac{n}{4}-15-K]$
if $|F_1|\geq 45\cdot8^4\cdot K$, where $K=K(\frac{1}{8})$ is the
constant as in Theorem \ref{Thm:GHS}. A theorem of Nikiforov
\cite[Theorem~1]{N08}\footnote{By refining the proof, one can let
$N=8400$.} states that there exists a sufficiently large $N$ such
that any graph of order $n\geq N$ such that  $\rho(G)>\sqrt{\lfloor\frac{n^2}{4}\rfloor}$
has a cycle of length $\ell$ for
every $\ell \in[3,\frac{n}{320}]$. Let $n_2$ be an integer such that
$$({\rm i})\ \frac{n_2}{4}-6\geq 45\cdot8^4\cdot K;\ ({\rm ii})\ n_2\geq\max\{944,N\};\ ({\rm iii})\ \frac{n_2}{320}\geq K;\ ({\rm iv})\
\varepsilon n_2\geq K+15.$$ Now if $n\geq\max\{n_1, n_2\}$, then $G$
contains all cycles $C_{\ell}$ with $\ell
\in[3,(\frac{1}{4}-\varepsilon)n]$.
The proof is complete. \hfill{\rule{4pt}{8pt}}

\bigskip

\noindent
{\bf{Note added}}\\

The result presented in this note was a part of work in \cite{LN} due to the current
authors. Till now, the problem of determining the value of $C$ is still open.

After the first version of this paper was written and submitted for publication, we have learned
our paper was cited in \cite{ZZ}, and the method
developed here (together with some powerful technique developed by Nikiforov in \cite{N08}) has been used by
Zhang and Zhao \cite{ZZ} to obtain a spectral condition for the existence of cycles with consecutive
odd lengths in non-bipartite graphs.

\section*{Acknowledgment}
The authors express their gratitude to the anonymous
reviewers for their detailed and constructive comments which
are very helpful to the improvement of the presentation of this
note, and to Prof. Agelos Georgakopoulos, the Managing Editor, for his
excellent editorial job.
The second author is also grateful to Xing Peng who gave an online talk on Ramsey Theory
for Tsinghua University. After Xing's talk, the second author drew his attention to the topic of Ramsey Theory of books versus
even cycles and realized the technique in \cite{ALPZ20+}.

\end{document}